\numberwithin{equation}{section}
\newtheorem{thm}{Theorem}[section]
\newtheorem{prop}[thm]{Proposition}
\newtheorem{cor}[thm]{Corollary}
\newtheorem{Rq}[thm]{Remark}
\theoremstyle{definition}
\theoremstyle{remark}
\theoremstyle{plain}
\DeclareMathAlphabet{\calptmx}{OMS}{ztmcm}{m}{n}
\newcommand{\R}{\mathbb{R}}
\newcommand{\Sn}{\mathbb{S}^n}
\newcommand{\Cc}{\mathcal{C}}
\newcommand{\Sb}{\mathbb{S}}
\newcommand{\non}{\noindent}
\author{Bakri Laurent}
\title{ Quantitative uniqueness for Schr\"odinger operator with $\mathcal{C}^1$ potential}
\begin{document}

\date{}
\maketitle
 \begin{abstract}
 We give an upper bound on the vanishing order of solutions to Schr\"odinger equation on a compact smooth manifold.
 Our method is based on Carleman type inequalities, and gives a generalisation to a result of H. Donnely and C. Fefferman \cite{DF1} on eigenfunctions.
 \end{abstract}

\section{Introduction}
Let $(M,g)$ be a smooth,  compact and connected, $n$-dimensional Riemannian manifold.
It is well kown that, if $u$ is a non trivial solution of second order linear elliptic equation on $M$, then all zeros of $u$ are of finite order (\cite{aron,HS}).
The aim of this paper is to obtain quantitative estimate on the vanishing order of (non trivial) solutions to  \begin{equation}\label{sc}\Delta u=Wu,\end{equation}
 when $W\in\Cc^1(M)$.
In the particular case of eigenfunctions of the Laplacian  (\emph{i.e.} $W=\lambda$ is a constant), it has been shown by H. Donnelly and C. Fefferman \cite{DF1}
that the vanishing order is bounded by $C\sqrt{\lambda}$.
 In $\cite{K}$, I. Kukavica established some quantitative results for solution to \eqref{sc}. When $W$ is a bounded function he obtained that the vanishing order of solutions 
to \eqref{sc} is everywhere less than 
 $C(1+\sqrt{\|W\|_{\infty}}+\left(\mathrm{osc}\:(W)\right)^2)$, where $\mathrm{osc}(W)=\sup W-\inf W$ and $C$ a constant depending only on $M$. If $W$ is
 $\Cc^1$ he got the upper bound  $C(1+\|W\|_{\Cc^1})$, with $\|W\|_{\Cc^1}=\|W\|_{\infty}+\|\nabla W\|_{\infty}$. Our  main result is the following
\begin{thm}\label{van}
The vanishing order of solutions to \eqref{sc} is everywhere less than $$C_1\sqrt{\|W\|_{\Cc^1}}+C_2,$$
where $C_1$ and $C_2$ are positive constants depending only on $M$.
\end{thm}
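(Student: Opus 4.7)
The plan is to follow the framework of Donnelly-Fefferman and Kukavica: combine a Carleman inequality for $\Delta$ in geodesic normal coordinates around an arbitrary point $x_{0}$, applied to cutoffs of the solution, with a standard doubling-to-vanishing-order iteration. The theorem reduces to establishing a doubling inequality
\[
\|u\|_{L^{2}(B_{2r}(x_{0}))} \;\leq\; \exp\!\bigl(C_{1}\sqrt{\|W\|_{\Cc^{1}}}+C_{2}\bigr)\, \|u\|_{L^{2}(B_{r}(x_{0}))}
\]
for all sufficiently small $r$, as the standard iteration of \cite{DF1} then converts this exponent directly into the claimed vanishing-order bound.

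I would work in geodesic normal coordinates on a small ball $B_{R_{0}}(x_{0})$, where $\Delta_{g}$ agrees with the Euclidean Laplacian up to lower-order terms absorbed once $R_{0}$ is small enough, and then record a Carleman inequality of the Donnelly-Fefferman-Kukavica type
\[
\tau^{3}\!\int e^{2\tau\phi} u^{2}\, w_{1}\,dx + \tau\!\int e^{2\tau\phi} |\nabla u|^{2}\, w_{2}\,dx \;\leq\; C\!\int e^{2\tau\phi} |\Delta u|^{2}\, w_{3}\,dx,
\]
valid for $u\in \Cc_{c}^{\infty}(B_{R_{0}}\setminus\{x_{0}\})$ and $\tau\geq\tau_{0}$, where $\phi(r) = -\log r + \sigma(r)$ is a convexly perturbed singular radial weight and $w_{1},w_{2},w_{3}$ are $r$-power weights dictated by scaling. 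The essential feature is the simultaneous presence of $\tau^{3}u^{2}$ and $\tau|\nabla u|^{2}$ on the left.

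The key refinement, which produces the square root rather than Kukavica's linear bound, is a single integration by parts on the right-hand side after substituting $\Delta u = Wu$. Writing
\[
\int e^{2\tau\phi} w_{3} W^{2} u^{2}\, dx \;=\; \int e^{2\tau\phi} w_{3} Wu\, \Delta u\, dx \;=\; -\!\int \nabla(e^{2\tau\phi} w_{3} Wu)\cdot\nabla u\, dx
\]
and expanding the gradient, one obtains a main term of size $\|W\|_{\infty}\!\int e^{2\tau\phi} w_{3}|\nabla u|^{2}$ plus cross terms of type $|u||\nabla u|$ with coefficients $\|\nabla W\|_{\infty}$, $\|W\|_{\infty}$, and $\tau\|W\|_{\infty}$. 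The main term is absorbed into $\tau\!\int|\nabla u|^{2}$ provided $\tau\gtrsim\|W\|_{\infty}R_{0}^{2}$, while Young's inequality on the cross terms, redistributed between $\tau^{3}u^{2}$ and $\tau|\nabla u|^{2}$, leads (because the two left-hand-side powers of $\tau$ multiply to $\tau^{4}$) to a threshold of the form $\tau\gtrsim\sqrt{\|\nabla W\|_{\infty}}$. Calibrating the weights $w_{i}$ and the perturbation $\sigma$ so that these thresholds combine into the single condition $\tau \geq C\sqrt{\|W\|_{\Cc^{1}}}$ is the central technical point.

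With $\tau$ so chosen, the standard cutoff-and-commutator argument (apply the Carleman inequality to $\chi u$ for a radial cutoff $\chi$ equal to $1$ on an inner annulus, and absorb the commutator $[\Delta,\chi]u$ supported in the transition region) produces a three-ball inequality and hence the doubling estimate displayed above; the iteration of \cite{DF1} then concludes. The main obstacle is precisely the calibration in the previous paragraph: a naive Young's inequality applied to the $\tau\|W\|_{\infty}$ cross term reproduces the linear-in-$\|W\|_{\infty}$ bound of Kukavica, and circumventing this requires the Carleman weight $\phi$ to be chosen so that this problematic term is generated by, and already absorbed into, the convexity estimate driving the Carleman inequality itself, rather than being treated a posteriori.
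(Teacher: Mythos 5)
Your high-level scaffold matches the paper exactly: Carleman estimate $\to$ three-ball inequality $\to$ doubling $\to$ vanishing-order bound, with the square-root threshold coming from a careful treatment of the $W$ terms under an integration by parts. You also correctly identify the central technical obstacle — the cross term with coefficient $\tau\|W\|_{\infty}$ that a naive Young inequality converts into a linear-in-$\|W\|_{\infty}$ threshold. But the proposal does not actually get past that obstacle; it only names it.

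The gap is in the choice of \emph{where} to integrate by parts. You propose proving a Carleman estimate for $\Delta$ alone, then substituting $\Delta u = Wu$ on the right and writing $\int e^{2\tau\phi} w_{3} W^{2}u^{2} = -\int \nabla(e^{2\tau\phi} w_{3} Wu)\cdot \nabla u$. Because at this stage the weight $e^{2\tau\phi}$ is still present as a multiplicative factor, $\nabla(e^{2\tau\phi}) = 2\tau(\nabla\phi)e^{2\tau\phi}$ injects a fresh factor of $\tau$, producing the $\tau\|W\|_{\infty}|u||\nabla u|$ cross term. No calibration of the $r$-power weights $w_{i}$ removes that extra $\tau$: Young's inequality against $\tau^{3}u^{2} + \tau|\nabla u|^{2}$ forces $\|W\|_{\infty}\lesssim \tau$ regardless of how the powers of $r$ are distributed, since $R_{0}$ is a fixed geometric radius. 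Iterating the integration by parts (using $u\nabla u = \tfrac12\nabla(u^{2})$) just moves the problem: the term $4\tau^{2}|\nabla\phi|^{2}w_{3}W\,u^{2}$ that appears still requires $\|W\|_{\infty}\lesssim\tau$. So the route as stated reproduces Kukavica's linear bound, which you acknowledge, but the sentence claiming the weight $\phi$ can be "chosen so that this problematic term is generated by, and already absorbed into, the convexity estimate" is a statement of intent, not an argument.

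What the paper actually does is different in a way that avoids the bad term altogether: the Carleman estimate is proved directly for the operator $\Delta + W$, i.e.\ $W$ is kept inside the conjugated operator $L_{\tau}$ from the start. In the expansion of $\|L_{\tau}u\|_{f}^{2}$, the cross term between $2\tau f'\partial_{t}u$ and $e^{2t}Wu$ is $\int 2\tau f' e^{2t}W\,\partial_{t}(u^{2})\,f'^{-3}\sqrt{\gamma}\,dt\,d\theta$, and one integrates $\partial_{t}(u^{2})$ by parts \emph{in the conjugated variables}, where the exponential weight has already been absorbed. The result is a pure $u^{2}$ term $\tau e^{2t}(\cdots)W u^{2}$ and a derivative term $\tau e^{2t}\partial_{t}W\,u^{2}$, both of which are dominated by $\tau^{3}e^{t}u^{2}$ precisely under $\tau \gtrsim \sqrt{\|W\|_{\Cc^{1}}}$ (the extra $e^{t}=r$ is crucial). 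No $\tau|u||\nabla u|$ cross term with a $W$ coefficient ever appears. The same mechanism handles the $\nabla W$ cross term in the $K_{3}$ block: $|\partial_{i}W\gamma^{ij}\partial_{j}u\,u| \leq c\tau^{2}(|D_{\theta}u|^{2}+|u|^{2})$ once $\tau^{2}\gtrsim\|W\|_{\Cc^{1}}$, and the resulting $\tau^{2}|f''|$-weighted terms sit comfortably below the $\tau^{3}$ and $\tau$ terms on the good side after the $\rho/\tau$ rescaling. To turn your sketch into a proof you would need to either adopt this "conjugate first, then integrate by parts" ordering, or else find a genuinely different cancellation; the a-posteriori substitution you describe does not yield the exponent $1/2$.
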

More precisely theorem \ref{van} is a direct consequence of the following doubling inequality on solutions (theorem \ref{dor}) : 
\begin{equation}\label{doubl}\|u\|_{L^2(B_{2r}(x_0))}\leq e^{C_1\sqrt{\|W\|_{\Cc^1}}+C_2}\|u\|_{L^2(B_r(x_0))}.\end{equation}
The exponent $1/2$ on $\|W\|_{\Cc^1}$ in this result is sharp and agrees with the result of Donnelly and Fefferman \cite{DF1} when $W$ is constant. 
Indeed consider the homogeneous polynomials
 $f_k(x_1,x_2,\cdots,x_{n+1})=\Re e(x_1+ix_2)^k$ defined in $\R^{n+1}$. Set
$Y_k$ the restriction of $f_k$ to $\mathbb{S}^n$. $(Y_k)_k$ is  a sequence of spherical harmonics and
$-\Delta_{\Sn} Y_k=k(k+n-1)Y_k=\lambda_kY_k$. The vanishing order at the north pole $N=(0,\cdots,0,1)$ of $Y_k$ is $k\geq C\sqrt{\lambda_k}$.\newline
Let us now discuss briefly the methods usually used to deal with quantitative uniqueness for linear partial differential equations. They are two principal methods : the first one is based on  Carleman-type estimates
 \cite{aron,DF1,DF3,horm2,JK,JL} and the second one relies on the frequency function of solutions \cite{D,GL,K,L}.
The goal of  both methods is to control the local behaviour of solutions. 
In the original works of Donnelly and Fefferman \cite{DF1}, the authors wrote down a Carleman estimate on the operator $\Delta +\lambda$. 
Later, several authors (F-H Lin \cite{L}, Jerison-Lebeau \cite{JL}, Kukavika \cite{K2},...)  obtained some generalizations and simplifications in the proof. 
In particular, if $u$ is an eigenfunction of the Laplace operator on $M$, with eigenvalue $\lambda$, then the function $\tilde{u}$ defined on $M\times[-T,T]$ by
$$\tilde{u}(x,t)=\cosh(\sqrt{\lambda}t)u(x)$$ satisifies $(\Delta  +\frac{\partial^2}{\partial t^2})\tilde{u}=0$. 
The problem is then simplified since one has only to deal with the $0$ eigenvalue
 of the operator $\Delta +\frac{\partial^2}{\partial t^2}$ on $M\times[-T,T]$.
 By example, in \cite{JL},  D. Jerison and G. Lebeau  established a Carleman estimate on $\Delta +\frac{\partial^2}{\partial t^2}$.
However, it was pointed out by Kukavica  \cite{K} that, 
the method of \cite{JL}  doesn't seems to extend easily when studying the more general equation \eqref{sc}.
Despite this, the point of our paper is that one can sucessfully establish a Carleman estimate directly on the operator $\Delta +W$. 
Furthermore it leads to a better upper bound on the vanishing order of solutions to \eqref{sc} (for $W\in\Cc^1$). 
\newline
The paper is organised as follows. In section 2 we establish Carleman estimate for the operator $\Delta  +W$. 
In section 3 we deduce, in a standard manner, three balls theorem for solutions to \eqref{sc}, then using compactness we derive doubling 
inequality which gives immediatly theorem \ref{van}.
In a fourthcoming paper we study the vanishing order of solutions when $W$ is only a bounded function.\vspace{0,5cm}\newline 
\section{Carleman estimates} 

\non Fix $x_0$ in $M$,  and let : $r=r(x)=d(x,x_0)$ the Riemannian distance from $x_0$. We denote by $B_r(x_0)$ the geodesic ball centered at  $x_0$ of radius $r$.
We will denote  by $\|\cdot\|$ the  $L^2$ norm. 
Recall that Carleman estimates are weighted integral inequalities with a weight function $e^{\tau\phi}$, where the function 
$\phi$ satisfy some convexity properties. 
Let us now define the weight function we will use.\newline  
For a fixed number $\varepsilon$ such that  $0<\varepsilon<1$ and $T_0<0$, we define the function $f$ on  $]-\infty,T_0[ $ by $f(t)=t-e^{\varepsilon t}$. 
One can check easily that,  for $|T_0|$ great enough, the function $f$ verifies the following properties:
\begin{equation}\label{f}
\begin{array}{lcr}
& 1-\varepsilon e^{\varepsilon T_0}\leq f^\prime(t)\leq 1&\forall t\in]-\infty,T_0[,\\
&\displaystyle{\lim _{t\rightarrow-\infty}-e^{-t}f^{\prime\prime}(t)}=+\infty. &
\end{array}
\end{equation}
Finally we define $\phi(x)=-f(\ln r(x))$. Now we can state the main result of this section:

\begin{thm}\label{tics}
 There exist positive constants $R_0, C,C_1,C_2$, which depend only on  $M$, such that, 
for any $\:W\in \Cc^1(M)$, $x_0\in M$, $u\in C^\infty_0(B_{R_0}(x_0)\setminus\{0\})$ and 
 $\tau \geq C_1\sqrt{\|W\|_{\mathcal{C}^1}}+C_2$, one has    
\begin{equation}\label{S}
C\left\|r^2e^{\tau\phi}\left(\Delta u +Wu \right)\right\|\geq \tau^\frac{3}{2}\left\|r^{\frac{\varepsilon}{2}}e^{\tau\phi}u\right\|
+ \tau^{\frac{1}{2}}\left\|r^{1+\frac{\varepsilon}{2}}e^{\tau\phi}\nabla u\right\|.
\end{equation}

\non Moreover, if  $$\mathrm{supp}(u)\subset\{x\in M; r(x)\geq\delta>0\},$$ then
\begin{equation}\label{Siv2}\begin{array}{rcc}
C\left\|r^2e^{\tau\phi}\left(\Delta u +Wu \right)\right\|&\geq& \tau^\frac{3}{2}\left\|r^{\frac{\varepsilon}{2}}e^{\tau\phi}u\right\| \\
+\ \tau\delta\left\|r^{-1}e^{\tau\phi}u\right\|&+& \tau^{\frac{1}{2}}\left\|r^{1+\frac{\varepsilon}{2}}e^{\tau\phi}\nabla u\right\|.
\end{array}\end{equation}
\end{thm}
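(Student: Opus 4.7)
The strategy is the classical Carleman machinery: change variables, conjugate by the weight, split into symmetric and skew parts, and extract positivity from a commutator. I would work in geodesic polar coordinates $(r,\omega)$ around $x_0$ and then put $t=\ln r$, moving the problem onto the semi-infinite cylinder $(-\infty,T_0)\times S^{n-1}$. A direct calculation shows that, modulo a perturbation of size $O(e^{\alpha t})$ coming from the curvature of $M$ (harmless for $R_0$ small enough),
$$r^{2}(\Delta+W)=\partial_t^{2}+(n-2)\partial_t+\Delta_\omega+r^{2}W,$$
where $\Delta_\omega$ is the spherical Laplacian. After absorbing a power of the radial Jacobian into the unknown, one can work with respect to the flat measure $dt\,d\omega$.

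Set $v=e^{\tau\phi}u$ with $\phi=-f(t)$. A routine computation shows that the conjugated operator $L_\tau:=e^{\tau\phi}\,r^{2}(\Delta+W)\,e^{-\tau\phi}$ splits as $L_\tau=S_\tau+A_\tau$ with
$$S_\tau=\partial_t^{2}+\Delta_\omega+\tau^{2}(f')^{2}+(n-2)\tau f'+r^{2}W,\qquad A_\tau=(2\tau f'+n-2)\partial_t+\tau f'',$$
respectively symmetric and skew-symmetric on $L^{2}(dt\,d\omega)$. Hence $\|L_\tau v\|^{2}\ge \langle [S_\tau,A_\tau]v,v\rangle$, and the whole task reduces to bounding the commutator from below by $c\tau^{3}r^{\varepsilon}|v|^{2}+c\tau r^{2+\varepsilon}|\nabla v|^{2}$ in the integral sense.

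The dominant positive contribution comes from $[\tau^{2}(f')^{2},\,2\tau f'\partial_t]=-4\tau^{3}(f')^{2}f''$: since $f''(t)=-\varepsilon^{2}e^{\varepsilon t}=-\varepsilon^{2}r^{\varepsilon}$ and $f'\sim 1$ by \eqref{f}, this is of size $\tau^{3}r^{\varepsilon}$ and produces the $\tau^{3/2}\|r^{\varepsilon/2}e^{\tau\phi}u\|$ term. In parallel, $[\partial_t^{2},A_\tau]$ contains $4\tau f''\partial_t^{2}$, which after one integration by parts against $v$ yields the weighted gradient term of \eqref{S}. The lower-order pieces coming from $f''',f''''$, from the commutators involving $n-2$, and from the metric perturbation are subsumed by taking $R_0$ small and $\tau$ large. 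The genuine new difficulty is $[r^{2}W,A_\tau]=-(2\tau f'+n-2)\,\partial_t(r^{2}W)$: since $\partial_t(r^{2}W)=2r^{2}W+r^{3}\partial_rW$, this is pointwise bounded by $C\tau r^{2}\|W\|_{\Cc^1}$. Absorbing it into the main positive term $c\tau^{3}r^{\varepsilon}$ requires $\tau\|W\|_{\Cc^1}r^{2-\varepsilon}\le\tfrac{c}{2}\tau^{3}$, which, using $r\le R_0<1$, reduces to the hypothesis $\tau\ge C_1\sqrt{\|W\|_{\Cc^1}}+C_2$. This is the main obstacle, and it is precisely at this point that the $\Cc^1$ regularity of $W$ is used to get the sharp $\sqrt{\cdot}$ threshold; a crude $L^{\infty}$-bound on $Wu$ would only allow one to take $\tau\gtrsim\|W\|_{\infty}^{2/3}$.

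For the sharpened estimate \eqref{Siv2} under the support hypothesis $r\ge\delta$, I would add to the basic identity a multiplier argument: using that the zeroth-order term $\tau^{2}(f')^{2}\sim\tau^{2}$ of $S_\tau$ already dominates on the support, one pairs $L_\tau v$ with $r^{-2}v$ and integrates by parts; the constraint $r\ge\delta$ converts one of the available factors of $r$ into $\delta$, giving the extra $\tau\delta\|r^{-1}e^{\tau\phi}u\|$ contribution without altering the treatment of $W$.
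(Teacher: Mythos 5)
Your overall strategy---polar coordinates, $t=\ln r$, conjugation by $e^{\tau\phi}$, split into symmetric/skew parts, and positivity from a commutator---is the same family of argument as the paper, and the way you handle the potential $W$ (placing $r^{2}W$ in the symmetric part and paying for $[r^{2}W,A_\tau]\sim\tau\,\partial_t(r^{2}W)$ by the hypothesis $\tau\gtrsim\sqrt{\|W\|_{\Cc^1}}$) is exactly the right idea. However, there is a concrete gap: your commutator does not produce the angular part of the gradient term $\tau^{1/2}\|r^{1+\varepsilon/2}e^{\tau\phi}\nabla u\|$ claimed in \eqref{S}. With the flat measure $dt\,d\omega$ and your $A_\tau=(2\tau f'+n-2)\partial_t+\tau f''$, the spherical Laplacian $\Delta_\omega$ commutes (up to $O(e^{t})$ curvature errors) with $A_\tau$, so $[\Delta_\omega,A_\tau]$ contributes nothing; equivalently, the two angular contributions $\langle 2\tau f'\partial_tv,\Delta_\omega v\rangle=\int\tau f''|D_\omega v|^{2}$ and $\langle\tau f''v,\Delta_\omega v\rangle=-\int\tau f''|D_\omega v|^{2}$ cancel exactly. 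The term $[\partial_t^{2},A_\tau]\ni 4\tau f''\partial_t^{2}$ that you invoke only controls the \emph{radial} derivative $|\partial_tv|^{2}$. So as written you would obtain $C\|r^{2}e^{\tau\phi}(\Delta u+Wu)\|\geq\tau^{3/2}\|r^{\varepsilon/2}e^{\tau\phi}u\|+\tau\|re^{\tau\phi}\partial_ru\|$ (which, incidentally, is stronger in the radial direction than \eqref{S}), but nothing on $D_\omega u$.

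The paper sidesteps this by working with the non-flat weighted norm $\|V\|_f^{2}=\int V^{2}\sqrt{\gamma}\,(f')^{-3}\,dt\,d\theta$. The extra factor $(f')^{-3}$ is not cosmetic: in the cross term $J_2=\langle 2\tau f'\partial_tu,\Delta_\theta u\rangle_f$ the coefficient to be differentiated in $t$ after integration by parts is $(f')^{-2}$ rather than $f'$, whose derivative $-2(f')^{-3}f''$ has the \emph{favorable} sign and size $\sim |f''|$, producing the missing positive term $+3\tau\int|f''|\,|D_\theta u|^{2}(f')^{-3}\sqrt{\gamma}\,dt\,d\theta$. The price to pay is that the clean self-adjoint/skew-adjoint splitting on which your proposal rests no longer holds: with the $(f')^{-3}$ weight the cancellation $(f')^{3}(f')^{-3}=1$ also kills the commutator contribution $-4\tau^{3}(f')^{2}f''$ from $J_3$ that you are relying on for the main $\tau^{3}$ term, and the paper instead extracts that term from the symmetric part via $I_1\geq\rho\tau^{-1}\|\sqrt{|f''|}(\partial_t^{2}u+(\tau^{2}(f')^{2}+e^{2t}W)u+\Delta_\theta u)\|_f^{2}$. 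In short, if you are content with the weaker estimate without the angular gradient (which, as the paper remarks, is all that is used later), your argument is sound; to recover the full statement \eqref{S} you need either the $(f')^{-3}$-weighted inner product or a separate device to generate positivity on $|D_\omega u|^{2}$. For \eqref{Siv2} the paper does not use a multiplier $r^{-2}v$: it derives a Poincar\'e-type inequality $c^{2}\int u^{2}e^{-t}\leq\frac{4}{\delta}\int|\partial_tu|^{2}$ from Cauchy--Schwarz applied to $\int\partial_t(u^{2})e^{-t}$ together with $e^{-t}\geq\delta e^{-2t}$ on the support, and feeds it into the already established bound $I^{2}\gtrsim\tau^{2}\|\partial_tu\|^{2}$; your multiplier suggestion is plausible but you would need to check the signs of the terms coming from $\langle\partial_t^{2}v,r^{-2}v\rangle$ and $\langle\Delta_\omega v,r^{-2}v\rangle$.
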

\begin{Rq}
 This inequality can be seen as a generalization of previous Carleman type estimates in the case that $W$ is a constant (see \cite{DF1}).
 Indeed when $W=\lambda$ one has $\sqrt{\|W\|}_{\Cc^1}=\sqrt{\lambda}$.
 The point is that since $W$ is $\Cc^1$ we will be allowed to integrate by parts, but then we have to take care of the derivatives of $W$.
  \vspace{0,4mm}
\end{Rq}
\begin{Rq}
 In the inequalities \eqref{S} and \eqref{Siv2} the gradient terms are not necessary to the purpose of this paper. 
We choose to include them for a more general statement.
\end{Rq}

\begin{proof}Hereafter $C$, $C_1$, $C_2$  and $c$ denote positive constants depending only upon $M$, though their values may change from one line to another. 
Without loss of generality, we  may suppose that all functions are real.   
We now introduce the polar geodesic coordinates $(r,\theta)$ near $x_0$. Using Einstein notation, the Laplace operator takes the form : 
$$r^2\Delta u=r^2\partial_r^2u+r^2\left(\partial_r\ln(\sqrt{\gamma})+\frac{n-1}{r}\right)\partial_ru+
\frac{1}{\sqrt{\gamma}}\partial_i(\sqrt{\gamma}\gamma^{ij}\partial_ju),$$
where $\displaystyle{\partial_i=\frac{\partial}{\partial\theta_i}}$ and for each fixed $r$,  $\ \gamma_{ij}(r,\theta)$  
is a metric on \: $\Sb^{n-1}$ and $\displaystyle{\gamma=\mathrm{det}(\gamma_{ij})}$.\newline
Since $(M,g)$ is smooth, we have for $r$ small enough :  
\begin{eqnarray}\label{m1}
\partial_r(\gamma^{ij})&\leq& C (\gamma^{ij})\ \ \ \mbox{(in the sense of tensors)}; 
\nonumber \\
|\partial_r(\gamma)|&\leq& C;\\
C^{-1}\leq\gamma&\leq& C.\nonumber 
 \end{eqnarray}

\non Set  $r=e^t$, we have $\displaystyle{\frac{\partial}{\partial r}=e^{-t}\frac{\partial}{\partial t}}$. 
Then the function $u$ is supported in \\ $]-\infty,T_0[\times\mathbb{S}^{n-1},$ 
where $|T_0|$ will be chosen large enough. In this new variables, we can write : 
$$e^{2t}\Delta u=\partial_t^2u+(n-2+\partial_t\mathrm{ln}\sqrt{\gamma})\partial_tu
+\frac{1}{\sqrt{\gamma}}\partial_i(\sqrt{\gamma}\gamma^{ij}\partial_ju).$$
The conditions (\ref{m1}) become
\begin{eqnarray}\label{m2}
\partial_t(\gamma^{ij})&\leq& Ce^t (\gamma^{ij})\nonumber\ \ \ \mbox{(in the sense of tensors)};  \\
|\partial_t(\gamma)|&\leq& Ce^t;\\
C^{-1}\leq\gamma &\leq & C. \nonumber
\end{eqnarray}
Now we introduce the conjugate operator : 
\begin{equation}\begin{array}{rcl}
L_\tau(u)&=&e^{2t}e^{\tau\phi}\Delta(e^{-\tau\phi}u)+e^{2t}Wu\\
               &=&\partial^2_tu+\left(2\tau f^\prime+n-2+\partial_t\mathrm{ln}\sqrt{\gamma}\right)\partial_tu\\
               &+&\left(\tau^2f^{\prime^2}+\tau f^{\prime\prime}+(n-2)\tau f^{\prime}+\tau\partial_t\mathrm{ln}\sqrt{\gamma}f^{\prime}\right)u\\
               &+&\Delta_\theta u+e^{2t}Wu,
     \end{array}\end{equation}
with $$\Delta_\theta u=\frac{1}{\sqrt{\gamma}}\partial_i\left(\sqrt{\gamma}\gamma^{ij}\partial_ju\right).$$

\noindent It will be useful for us to introduce the following $L^2$ norm on $]-\infty,T_0[\times\Sb^{n-1} $: 
$$\|V\|_f^2=\int_{]-\infty,T_0[\times\Sb^{n-1}} V^2\sqrt{\gamma}{f^{\prime}}^{-3}dtd\theta,$$ where $d\theta$ is the usual measure on $\Sb^{n-1}$.
The corresponding inner product is denoted by  $\left\langle\cdot,\cdot\right\rangle_f$ , \emph{i.e} $$\langle u,v\rangle_f = \int uv\sqrt{\gamma}{f^{\prime}}^{-
3}dtd\theta.$$ 
\noindent We will estimate from below $\|L_\tau u\|^2_f$ by using elementary algebra and integrations by parts. We are concerned, in the computation, by the power of $\tau$ 
and  exponenial decay when $t$ goes to $-\infty$. First by triangular inequality one has
\begin{equation}
\|L_\tau(u)\|_f\geq I-I\!I ,
\end{equation}
with 
\begin{equation}\begin{array}{rcl}
I&=& \left\|\partial^2_tu+2\tau f^\prime\partial_tu+\tau^2f^{\prime^2}u+e^{2t}Wu+\Delta_\theta u\right\|_f,\\
I\!I&=&\left\|\tau f^{\prime\prime}u+(n-2)\tau f^\prime u+\tau\partial_t\mathrm{ln}\sqrt{\gamma}f^{\prime}u\right\|_f\\
&+&\left\|(n-2)\partial_tu+\partial_t\ln\sqrt{\gamma}\partial_tu\right\|_f.
\end{array}\end{equation}
We will be able to absorb $I\!I$ later. Then we compute $I^2$ :

$$I^2=I_1+I_2+I_3,$$ with 

\begin{equation}\begin{array}{rcl}
I_1&=&\|\partial^2_tu+(\tau^2f^{\prime^2}+e^{2t}W)u+\Delta_\theta u\|_f^2\\
I_2&=&\|2\tau f^\prime\partial_tu\|_f^2\\
I_3&=&2\left<2\tau f^{\prime}\partial_tu ,\partial^2_tu+\tau^2f^{\prime^2}u+e^{2t}Wu+\Delta_\theta u\right>_f\label{I}
\end{array}\end{equation}
           
\non In order to compute $I_3$ we write it in a convenient way: 
\begin{equation}
 I_3=J_1+J_2+J_3,
\end{equation}
where the integrals $J_i$ are defined by :
\begin{equation}\label{I3}
\begin{array}{rcl}
J_1&=&2\tau \int f^\prime \partial_t(|\partial_tu|^2)f^{\prime^{-3}}\sqrt{\gamma}dtd\theta\\
J_2&=&4\tau\int f^\prime\partial_tu\partial_i\left(\sqrt{\gamma}\gamma^{ij}\partial_ju\right)f^{\prime^{-3}}dtd\theta\\
J_3&=&\int\left(2\tau^3 (f^\prime)^3+2\tau f^\prime e^{2t}W\right)2u\partial_tuf^{\prime^{-3}}\sqrt{\gamma}dtd\theta.
\end{array}
\end{equation}
\noindent Now we will use integration by parts to estimate each terms of \eqref{I3}. 
Note that $f$ is radial and that $2\partial_tu\partial_t^2u=\partial_t(|\partial_tu|^2)$. We find that  :
\begin{equation*}\begin{array}{rcl}
J_1&=& \int\left(4\tau f^{\prime\prime}\right)|\partial_tu|^2f^{\prime^{-3}}\sqrt{\gamma}dtd\theta\\
&-&\int2\tau f^\prime\partial_t\mathrm{ln}\sqrt{\gamma}|\partial_{t}u|^2f^{\prime^{-3}}\sqrt{\gamma}dtd\theta.\end{array}
\end{equation*}
The conditions \eqref{m2} imply that $|\partial_t\ln\sqrt{\gamma}|\leq Ce^t$. Then properties \eqref{f} on $f$ gives, for large $|T_0|$ 
that $|\partial_t\ln\sqrt{\gamma}|$ is small compared to $|f^{\prime\prime}|$. Then one has   
\begin{equation}\label{J_1}J_1\geq -c\tau\int |f^{\prime\prime}|\cdot|\partial_tu|^2f^{\prime^{-3}}\sqrt{\gamma}dtd\theta.\end{equation}

\noindent Now in order to estimate $J_2$ we first integrate by parts with respect to $\partial_i$ : 
\begin{equation*}\begin{array}{rcl}
J_2
   &=&-2\int2\tau f^{\prime}\partial_t\partial_iu\gamma^{ij}\partial_juf^{\prime^{-3}}\sqrt{\gamma}dtd\theta.                  
     \end{array}
\end{equation*}            
Then we integrate by parts with respect to $\partial_t$. We get : 
\begin{equation*}\begin{array}{rcl}
J_2&=&-4\tau\int f^{\prime\prime}\gamma^{ij}\partial_iu\partial_juf^{\prime^{-3}}\sqrt{\gamma}dtd\theta\\
&+&\int2\tau f^{\prime}\partial_t\mathrm{ln}\sqrt{\gamma}\gamma^{ij}\partial_iu\partial_juf^{\prime^{-3}}\sqrt{\gamma}dtd\theta\\
&+&\int2\tau f^\prime\partial_t(\gamma^{ij})\partial_iu\partial_juf^{\prime^{-3}}\sqrt{\gamma}dtd\theta.
\end{array}
\end{equation*}
We denote  $|D_\theta u|^2=\partial_iu\gamma^{ij}\partial_ju$. Now using that $-f^{\prime\prime}$ is non-negative and $\tau$ is large, 
the conditions \eqref{f}  and \eqref{m2} gives for $|T_0|$ large enough:
\begin{equation}\label{J_2}
 J_2\geq 3\tau\int|f^{\prime\prime}|\cdot|D_\theta u|^2f^{\prime^{-3}}\sqrt{\gamma}dtd\theta.
\end{equation}
Similarly computation of $J_3$ gives :
\begin{equation}\label{J_31}\begin{array}{rcl}
J_3&=&-2\int\tau^3\partial_t\mathrm{ln}(\sqrt{\gamma})u^2\sqrt{\gamma}dtd\theta\\
&-&\int(4f^\prime-4 f^{\prime\prime}+2f^{\prime}\partial_t\ln\sqrt{\gamma} )\tau e^{2t}Wu^2f^{\prime^{-3}}\sqrt{\gamma}dtd\theta\\
&-&\int2\tau f^\prime e^{2t}\partial_tW|u|^2f^{\prime^{-3}}\sqrt{\gamma}dtd\theta.
\end{array}\end{equation}
\noindent Now we assume that \begin{equation}\label{tau}\tau\geq C_1\sqrt{\|W\|_{\mathcal{C}^1}}+C_2.\end{equation}
 From  \eqref{f} and \eqref{m2} one can see that if $C_1$, $C_2$ and $|T_0|$ are large enough, then\:\newline
\begin{equation}\label{J_3}
J_3\geq-c\tau^3\int e^t|u|^2f^{\prime^{-3}}\sqrt{\gamma}dtd\theta.
\end{equation}

 \noindent Thus far, using \eqref{J_1},\eqref{J_2} and \eqref{J_3}, we have : 
 \begin{equation}\label{I_3}\begin{array}{rcl}
 I_3 &\geq & 3\tau \int\left|f^{\prime\prime}\right|\left|D_\theta u\right|^2f^{\prime^{-3}}\sqrt{\gamma}dtd\theta -c\tau^3\int e^t|u|^2f^{\prime^{-3}}\sqrt{\gamma}dtd\theta\\
 &-&c\tau\int\left|f^{\prime\prime}\right|\left|\partial_t u\right|^2f^{\prime^{-3}}\sqrt{\gamma}dtd\theta.
 \end{array}\end{equation}

\noindent Now we consider  $I_1$ :
$$I_1=\left\|\partial^2_tu+\left(\tau^2
f^{\prime^2}+e^{2t}W\right)u+\Delta_\theta u\right\|_f^2 .$$
Let $\rho>0$ a small number to be chosen later. Since  $|f^{\prime\prime}|\leq1$ and $\tau\geq1$, we have : \newline
\begin{equation}I_1\geq\frac{\rho}{\tau}I_1^\prime,\end{equation}
where $I_1^\prime$ is defined by :
\begin{equation}I_1^\prime=\left\|\sqrt{|f^{\prime\prime}|}\left[\partial^2_tu+\left(\tau^2f^{\prime^2}+e^{2t}W\right)u+\Delta_\theta u\right]\right\|_f^2 \end{equation}
and one has \begin{equation}I_1^\prime=K_1+K_2+K_3,\end{equation} 
with 
\begin{equation}\label{Ki}
\begin{array}{rcl}
  K_1&=&\left\|\sqrt{|f^{\prime\prime}|}\left(\partial_t^2u+\Delta_\theta u\right)\right\|_f^2, \\
K_2&=&\left\|\sqrt{|f^{\prime\prime}|}\left(\tau^2f^{\prime^2}+e^{2t}W\right)u\right\|_f^2,  \\
K_3&=&2\left\langle\left(\partial_t^2u+\Delta_\theta u\right)\left|f^{\prime\prime}\right|,\left(\tau^2f^{\prime^2}+e^{2t}W\right)u\right\rangle_f.
\end{array}\end{equation}
Integrating by parts gives : 
\begin{equation}\label{K_3}\begin{array}{rcl}
K_3&=&2\int f^{\prime\prime}\left(\tau^2f^{\prime^2}+e^{2t}W\right)|\partial_tu|^2f^{\prime^{-3}}\sqrt{\gamma}dtd\theta\\
&+&2\int \partial_t\left[f^{\prime\prime}\left(\tau^2f^{\prime^2}+e^{2t}W \right)\right]\partial_tuu\sqrt{\gamma}f^{\prime^{-3}}dtd\theta \\
&-&6\int \left(f^{\prime\prime^2}f^{\prime^{-1}}\left(\tau^2f^{\prime^2}+e^{2t}W \right)\right)\partial_tuu\sqrt{\gamma}f^{\prime^{-3}}dtd\theta \\
&+&2\int f^{\prime\prime}\left(\tau^2f^{\prime^2}+e^{2t}W\right)\partial_t\mathrm{ln}\sqrt{\gamma}\partial_tuuf^{\prime^{-3}}\sqrt{\gamma}dtd\theta\\
&+&2\int f^{\prime\prime}\left(\tau^2f^{\prime^2}+e^{2t}W\right)|D_\theta u|^2f^{\prime^{-3}}\sqrt{\gamma}dtd\theta\\
&+&2\int f^{\prime\prime}e^{2t}\partial_iW\cdot\gamma^{ij}\partial_juuf^{\prime^{-3}}\sqrt{\gamma}dtd\theta.
\end{array}\end{equation}
 The condition $\tau\geq C_1\sqrt{\|W\|_{\Cc^1}}+C_2$ implies, $$|\partial_iW\gamma^{ij}\partial_juu|\leq c\tau^2(|D_\theta u|^2+|u|^2).$$
Now since  $2\partial_tuu\leq u^2+|\partial_tu|^2$, we can  use conditions \eqref{f} and \eqref{m2} to get

\begin{equation}
 K_3\geq-c\tau^2\int|f^{\prime\prime}|\left(|\partial_tu|^2+|D_\theta u|^2+|u|^2\right)f^{\prime^{-3}}\sqrt{\gamma}dtd\theta\\
\end{equation}
We also have 

\begin{equation}
K_2\geq c\tau^4\int|f^{\prime\prime}||u|^2f^{\prime^{-3}}\sqrt{\gamma}dtd\theta
\end{equation}
and since  $K_1\geq 0$ ,  
\begin{equation}\label{I_1}\begin{array}{rcl}
I_1&\geq&-\rho c\tau\int|f^{\prime\prime}|\left(|\partial_t u|^2+|D_\theta u|^2\right)f^{\prime^{-3}}\sqrt{\gamma}dtd\theta\\
&+&C\tau^{3}\rho\int|f^{\prime\prime}||u|^2f^{\prime^{-3}}\sqrt{\gamma}dtd\theta.
\end{array}\end{equation}

\noindent Then using  \eqref{I_3} and \eqref{I_1} 
\begin{equation}\label{pro}\begin{array}{rcl}
I^2  &\geq & 4\tau^2\| f^\prime\partial_tu\|_f^2+3\tau\int|f^{\prime\prime}||D_\theta u|^2f^{\prime^{-3}}\sqrt{\gamma}dtd\theta \\
&+&C\tau^{3}\rho\int|f^{\prime\prime}||u|^2f^{\prime^{-3}}\sqrt{\gamma}dtd\theta-c\tau^3\int e^t|u|^2f^{\prime^{-3}}\sqrt{\gamma}dtd\theta\\
&-&\rho c\tau\int|f^{\prime\prime}|\left(|u|^2+|\partial_tu|^2+|D_\theta u|^2\right)f^{\prime^{-3}}\sqrt{\gamma}dtd\theta.\\
&-&c\tau\int|f^{\prime\prime}||\partial_tu|^2f^{\prime^{-3}}\sqrt{\gamma}dtd\theta
\end{array}\end{equation}

\noindent
Now one needs to check that every non-positive term in the right hand side of \eqref{pro} can be absorbed in the first three terms. 
\\ First fix $\rho$ small enough such that
$$\rho c\tau\int|f^{\prime\prime}|\cdot|D_\theta u|^2{f^{\prime}}^{-3}\sqrt{\gamma}dtd\theta\leq 2\tau\int|f^{\prime\prime}|\cdot|D_\theta u|^2f^{\prime^{-3}}\sqrt{\gamma}dtd\theta$$
where $c$ is the constant appearing in \eqref{pro}. The other  terms in the last integral of \eqref{pro} can then be absorbed by comparing powers of $\tau$ 
(for $C_2$ large enough). 
Finally since conditions \eqref{f} imply that $e^t$ is small compared to $|f^{\prime\prime}|$, 
we can absorb   $-c\tau^3e^t|u|^2$ in $C\tau^{3}\rho|f^{\prime\prime}||u|^2$.

\noindent Thus we obtain :
\begin{equation}\label{ssu}\begin{array}{rcl}
 I^2 &\geq &C\tau^2\int|\partial_t u|^2f^{\prime^{-3}}\sqrt{\gamma}dtd\theta+C\tau\int|f^{\prime\prime}||D_\theta u|^2f^{\prime^{-3}}\sqrt{\gamma}dtd\theta\\
&+ &C\tau^{3}\int|f^{\prime\prime}||u|^2f^{\prime^{-3}}\sqrt{\gamma}dtd\theta
\end{array}\end{equation}

\noindent As before, we can check that $I\!I$ can be absorbed in $I$ for  $|T_0|$ and $\tau$ large enough. 
Then we obtain \begin{equation}\label{bla}\|L_\tau u\|_f^2\geq C\tau^3\|\sqrt{|f^{\prime\prime}|} u\|_f^2+C\tau^2\|\partial_t u\|_f^2
+C\tau\|\sqrt{|f^{\prime\prime}|}D_\theta u\|_f^2 .\end{equation}
Note that, since $\tau$ is large and $\sqrt{|f^{\prime\prime}|}\leq1$, one has
 \begin{equation}\|L_\tau u\|_f^2\geq C\tau^3\|\sqrt{|f^{\prime\prime}|} u\|_f^2+c\tau\|\sqrt{|f^{\prime\prime}|}\partial_t u\|_f^2
+C\tau\|\sqrt{|f^{\prime\prime}|}D_\theta u\|_f^2,\end{equation}
and the constant $c$ can be choosen arbitrary smaller than $C$. 
If we set  $v=e^{-\tau\phi}u$, then we have 
\begin{equation*}
 \begin{array}{rcl}\|e^{2t}e^{\tau\phi}(\Delta v+Wv)\|_f^2&\geq& C\tau^3\|\sqrt{|f^{\prime\prime}|}e^{\tau\phi} v\|_f^2
-c\tau^3\|\sqrt{|f^{\prime\prime}|}f^\prime e^{\tau\phi} v\|_f^2\\
+
\frac{c}{2}\tau\|\sqrt{|f^{\prime\prime}|}e^{\tau\phi}\partial_t v\|_f^2&+&C\tau\|\sqrt{|f^{\prime\prime}|}e^{\tau\phi} D_\theta v\|_f^2\end{array}.
\end{equation*}
Finally since $f^\prime$ is close to 1 one can absorb the negative term to obtain
 
\begin{equation}\begin{array}{rcl}\|e^{2t}e^{\tau\phi}(\Delta v+Wv)\|_f^2&\geq& C\tau^3\|\sqrt{|f^{\prime\prime}|}e^{\tau\phi} v\|_f^2\\
+
C\tau\|\sqrt{|f^{\prime\prime}|}e^{\tau\phi}\partial_t v\|_f^2&+&C\tau\|\sqrt{|f^{\prime\prime}|}e^{\tau\phi} D_\theta v\|_f^2\end{array}.\end{equation}
It remains to get back to the usual $L^2$ norm. First note that since $f^\prime$ is close to 1 \eqref{f},
 we can get the same estimate without the term $(f^\prime)^{-3}$ in the integrals. Recall that in polar coordinates $(r,\theta)$ the volume element 
is $r^{n-1}\sqrt{\gamma}drd\theta$, we can deduce from \eqref{ssu} by substitution that : 
\begin{equation}
 \begin{array}{rcl}
  \|r^2e^{\tau\phi}(\Delta v+Wv)r^{-\frac{n}{2}}\|^2&\geq&C\tau^3\|r^\frac{\varepsilon}{2}e^{\tau\phi}vr^{-\frac{n}{2}}\|^2\\
&+&C\tau\|r^{1+\frac{\varepsilon}{2}}e^{\tau\phi}\nabla vr^{-\frac{n}{2}}\|^2.
 \end{array}
\end{equation}
Finally one can get rid of the term $r^{-\frac{n}{2}}$ by replacing $\tau$ with $\tau+\frac{n}{2}$. Indeed from 
$e^{\tau\phi}r^{-\frac{n}{2}}=e^{(\tau+\frac{n}{2})\phi}e^{-\frac{n}{2}r^\varepsilon}$ one can check easily that, for $r$ small enough  
$$\frac{1}{2}e^{(\tau+\frac{n}{2})\phi}\leq e^{\tau\phi}r^{-\frac{n}{2}}\leq e^{(\tau+\frac{n}{2})\phi}.$$ 

\non This achieves the  proof of the first part of theorem \ref{tics}.\\

\non Now suppose that  $\mathrm{supp}(u)\subset\{x\in M; r(x)\geq\delta>0\}$ and define $T_1=\ln\delta$.\newline

\noindent Cauchy-Schwarz inequality apply to $$\int\partial_t(u^2)e^{-t}\sqrt{\gamma}dtd\theta=2\int u\partial_tue^{-t}\sqrt{\gamma}dtd\theta$$ 
gives
\begin{equation}\label{d1}
 \int\partial_t(u^2)e^{-t}\sqrt{\gamma}dtd\theta\leq 2\left(\int\left(\partial_tu\right)^2e^{-t}\sqrt{\gamma}dtd\theta \right)^{\frac{1}{2}}
\left(\int u^2e^{-t}\sqrt{\gamma}dtd\theta\right)^{\frac{1}{2}}.
\end{equation}
On the other hand, integrating by parts gives
 \begin{equation}
      \int\partial_t(u^2)e^{-t}\sqrt{\gamma}dtd\theta = \int u^2e^{-t}\sqrt{\gamma}dtd\theta-\int u^2e^{-t}\partial_t(\ln(\sqrt{\gamma}))\sqrt{\gamma}dtd\theta.
     \end{equation}
Now since  $|\partial_t\ln\sqrt{\gamma}|\leq Ce^t$ for $|T_0|$ large enough we can deduce :
\begin{equation}\label{d2}
 \int\partial_t(u^2)e^{-t}\sqrt{\gamma}dtd\theta \geq c  \int u^2e^{-t}\sqrt{\gamma}dtd\theta.
\end{equation}
Combining \eqref{d1} and  \eqref{d2} gives  
\begin{eqnarray*}\label{prout}
 c^2 \int u^2e^{-t}\sqrt{\gamma}dtd\theta&\leq& 4\int\left(\partial_tu\right)^2e^{-t}\sqrt{\gamma}dtd\theta\\
&\leq&4e^{-T_1}\int\left(\partial_t u\right)^2\sqrt{\gamma}dtd\theta.\end{eqnarray*}

\noindent Finally, droping all terms except  $\tau^2\int|\partial_t u|^2f^{\prime^{-3}}\sqrt{\gamma}dtd\theta$  in \eqref{ssu}  gives :

\begin{eqnarray*}
C^{\prime}I^2\geq \tau^2 \delta^2 \|e^{-t}u\|_f^2.
\end{eqnarray*}\vspace{0,5cm}
Inequality \eqref{ssu} can then be replaced by :  
\begin{equation}\begin{array}{rcl}
 I^2 &\geq &C\tau^2\int|\partial_t u|^2f^{\prime^{-3}}\sqrt{\gamma}dtd\theta+C\tau\int|f^{\prime\prime}|\cdot|D_\theta u|^2f^{\prime^{-3}}\sqrt{\gamma}dtd\theta\\
&+ &C\tau^{3}\int|f^{\prime\prime}|\cdot|u|^2f^{\prime^{-3}}\sqrt{\gamma}dtd\theta+C\tau^2 \delta^2\int|u|^2{f^\prime}^{-3}\sqrt{\gamma}dtd\theta.
\end{array}\end{equation}
The rest of the proof follows in a similar way than the first part. 
\end{proof}

\section{Doubling inequality}
In this section we prove a doubling inequality for solutions of \eqref{sc}. 
First we deduce from Carleman estimate a three balls theorem for solutions. 
The standard way to do so is to apply such estimate, 
to $\psi u$ where $\psi$ is an appropriate cut off function and $u$ a solution, and make a good choice of the parameter $\tau$ (see \cite{JL}).
 We give a proof, following the method of Donnely and Fefferman \cite{DF1}, adapted to our choice of weight functions in the Carleman estimate.  
\begin{prop}[Three balls inequality]\label{tst}
There exist positive constants $R_1$, $C_1$, $C_2$ and $0<\alpha<1$  wich depend only on $M$ such that, if $u$ is a solution to \eqref{sc} with $W$ of class $\Cc^1$,  
then for any $R<R_1$, and any $x_0\in M, $ one has 
\begin{equation}\label{ttc}
\| u\|_{B_R(x_0)}\leq e^ {C_1 \sqrt{\|W\|_{\Cc^1}}+C_2}\|u\|_{B_{\frac{R}{2}}(x_0)}^\alpha\| u\|_{B_{2R}(x_0)}^{1-\alpha}.
\end{equation}
\end{prop}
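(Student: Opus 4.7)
The plan is to apply the Carleman estimate of Theorem~\ref{tics} to $v=\psi u$, where $\psi$ is a smooth radial cutoff adapted to the three scales $R/2,R,2R$; the commutator $[\Delta,\psi]u$ produced on the right hand side is supported in two thin annuli, and a careful book-keeping of the weights $e^{\tau\phi(r)}$ at these three scales, together with an optimization in $\tau$, will produce \eqref{ttc}. This is the classical Donnelly--Fefferman scheme \cite{DF1} adapted to the weight $\phi=-f(\ln r)$ used here.

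First I would fix $\psi\in C^{\infty}_{0}(M)$ depending only on $r=d(\cdot,x_{0})$ with $\psi\equiv 1$ on $\{R/4\leq r\leq 5R/4\}$, $\psi\equiv 0$ on $\{r\leq R/8\}\cup\{r\geq 7R/4\}$, and $|\nabla^{k}\psi|\leq CR^{-k}$ for $k=1,2$. For $R<R_{1}$ small enough, $\psi u$ is supported in $B_{R_{0}}(x_{0})\setminus\{x_{0}\}$ and is admissible in \eqref{S}. Since $u$ solves \eqref{sc}, the function $v=\psi u$ satisfies $(\Delta-W)v=[\Delta,\psi]u=2\nabla\psi\cdot\nabla u+(\Delta\psi)u$, so applying Theorem~\ref{tics} (with $-W$ in place of $W$, which leaves the statement unchanged since $\|-W\|_{\Cc^{1}}=\|W\|_{\Cc^{1}}$) to $v$ yields
\begin{equation*}
\tau^{3/2}\bigl\|r^{\varepsilon/2}e^{\tau\phi}\psi u\bigr\|\;\leq\;C\bigl\|r^{2}e^{\tau\phi}[\Delta,\psi]u\bigr\|_{L^{2}(A_{1}\cup A_{2})},
\end{equation*}
where $A_{1}=\{R/8\leq r\leq R/4\}$ and $A_{2}=\{5R/4\leq r\leq 7R/4\}$ carry $\nabla\psi$.

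Next I would exploit that $\phi$ is strictly decreasing in $r$ to localize the weights: $e^{\tau\phi(r)}\geq e^{\tau\phi(R)}$ on $B_{R}\setminus B_{R/4}$, while $e^{\tau\phi(r)}\leq e^{\tau\phi(R/8)}$ on $A_{1}$ and $e^{\tau\phi(r)}\leq e^{\tau\phi(5R/4)}$ on $A_{2}$. Together with the pointwise bound $|r^{2}[\Delta,\psi]u|\leq C(R|\nabla u|+|u|)$ and a standard Caccioppoli inequality for \eqref{sc},
\begin{equation*}
\|\nabla u\|_{L^{2}(B_{r})}\leq C\bigl(r^{-1}+\|W\|_{\infty}^{1/2}\bigr)\|u\|_{L^{2}(B_{2r})},
\end{equation*}
applied with $r=R/4$ and $r=7R/8$, this delivers
\begin{equation*}
\|u\|_{B_{R}\setminus B_{R/4}}\;\leq\;Ce^{\tau(\phi(R/8)-\phi(R))}\|u\|_{B_{R/2}}+Ce^{\tau(\phi(5R/4)-\phi(R))}\|u\|_{B_{2R}}.
\end{equation*}
The factor $\|W\|_{\infty}^{1/2}\leq\|W\|_{\Cc^{1}}^{1/2}$ produced by Caccioppoli is harmless because $\tau\geq C_{1}\|W\|_{\Cc^{1}}^{1/2}+C_{2}$, hence it is dominated by $\tau$ and absorbed into $C$. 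Setting $\alpha_{1}=\phi(R/8)-\phi(R)$ and $\alpha_{2}=\phi(R)-\phi(5R/4)$, which for $R<R_{1}$ are bounded between positive $M$-dependent constants (since $\phi(r)\sim -\ln r$ to leading order), and combining with the trivial bound $\|u\|_{B_{R}}\leq\|u\|_{B_{R/2}}+\|u\|_{B_{R}\setminus B_{R/4}}$, I obtain for every $\tau\geq\tau_{0}:=C_{1}\|W\|_{\Cc^{1}}^{1/2}+C_{2}$
\begin{equation*}
\|u\|_{B_{R}}\;\leq\;Ce^{\tau\alpha_{1}}\|u\|_{B_{R/2}}+Ce^{-\tau\alpha_{2}}\|u\|_{B_{2R}}.
\end{equation*}

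Finally I would optimize in $\tau$. The unconstrained minimizer of the right hand side is $\tau^{\star}=(\alpha_{1}+\alpha_{2})^{-1}\log(\alpha_{2}\|u\|_{B_{2R}}/(\alpha_{1}\|u\|_{B_{R/2}}))$; if $\tau^{\star}\geq\tau_{0}$, then this choice produces \eqref{ttc} with $\alpha=\alpha_{2}/(\alpha_{1}+\alpha_{2})\in(0,1)$ and prefactor of order $1$. Otherwise $\|u\|_{B_{2R}}\leq Ce^{\tau_{0}(\alpha_{1}+\alpha_{2})}\|u\|_{B_{R/2}}$ and \eqref{ttc} follows directly from $\|u\|_{B_{R}}\leq\|u\|_{B_{2R}}$ with the prefactor $e^{C_{1}\|W\|_{\Cc^{1}}^{1/2}+C_{2}}$. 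The main technical nuisance I anticipate is the bookkeeping of $\|W\|_{\Cc^{1}}$-dependence in the middle step: specifically, one must check that the $\sqrt{\|W\|_{\infty}}$ coming out of Caccioppoli, and any further $\|W\|$-factors produced by the transition regions, are swallowed by $\tau$ rather than accumulating multiplicatively in front of the exponential $e^{-\tau\alpha_{2}}$ attached to $\|u\|_{B_{2R}}$, since that is precisely the term whose exponential decay makes the optimization work.
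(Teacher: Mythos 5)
Your overall scheme is the same as the paper's (cutoff, Carleman, weight monotonicity, Caccioppoli, constrained optimization in $\tau$), and your case distinction at the end is a legitimate way to package the choice of $\tau$ that the paper makes directly. However, there is a genuine scaling gap in the middle step.

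You invoke the first estimate \eqref{S} of Theorem~\ref{tics} and keep only $\tau^{3/2}\|r^{\varepsilon/2}e^{\tau\phi}\psi u\|$ on the left. On the support of $\psi u$ (where $r\sim R$) this term is of size $\tau^{3/2}R^{\varepsilon/2}\|e^{\tau\phi}u\|$, while on the right the $r^{2}$ weight exactly cancels the $R^{-2}$ and $R^{-1}$ coming from $\nabla^{2}\psi$ and $\nabla\psi$, giving terms of size $\|e^{\tau\phi}u\|+R\|e^{\tau\phi}\nabla u\|$ with \emph{no} residual $R$-power. When you divide, you pick up a factor $\tau^{-3/2}R^{-\varepsilon/2}$, which you have silently hidden in the constant $C$ of your estimate for $\|u\|_{B_{R}\setminus B_{R/4}}$. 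Since $\tau$ is bounded below only by $C_{1}\sqrt{\|W\|_{\Cc^{1}}}+C_{2}$, which is independent of $R$, the factor $R^{-\varepsilon/2}$ is genuinely unbounded as $R\to 0$. Tracking it through the optimization, it survives as a factor of the form $R^{-c\varepsilon}$ (some $c>0$) in the prefactor of \eqref{ttc}, which contradicts the requirement that $C_{1},C_{2}$ depend only on $M$ and not on $R<R_{1}$.

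The fix is precisely why the paper proves the second, sharper estimate \eqref{Siv2}: since $\psi u$ is supported in $\{r\geq\delta\}$ with $\delta$ comparable to $R$, one may instead keep the term $\tau\delta\|r^{-1}e^{\tau\phi}\psi u\|$ on the left. On the annular support of $\psi u$, the quantity $\delta r^{-1}$ is bounded below by a universal constant, so this term is $\gtrsim\tau\|e^{\tau\phi}\psi u\|$ with no $R$-power at all. That is the term the paper retains (``In particular we have $C\|r^{2}e^{\tau\phi}(\Delta\psi\,u+2\nabla u\cdot\nabla\psi)\|\geq\tau\|e^{\tau\phi}\psi u\|$''), and it makes the subsequent bookkeeping $R$-uniform. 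Replacing \eqref{S} by \eqref{Siv2} and using the $\tau\delta\|r^{-1}\cdot\|$ term closes the gap; the rest of your argument (monotonicity of $\phi$, absorption of $\|W\|_{\infty}^{1/2}$ from Caccioppoli into the $e^{C_{1}\sqrt{\|W\|_{\Cc^{1}}}}$ prefactor, optimization in $\tau$ with a floor $\tau_{0}$) then goes through as you describe.
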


\begin{proof}
Let $x_0$ a point in $M$. Let $u$ be a solution 
to \eqref{sc} and $R$ such that $0<R<\frac{R_0}{2}$ with $R_0$ as in theorem \ref{tics}. Recall that $r(x)$ is the riemannian distance between $x$ and $x_0$ and $B_r$ the geodesic ball centered at $x_0$ of radius $r$. 
If $v$ is a  function defined in a neigborhood of $x_0$, we denote by  $\|v\|_R$ the $L^2$ norm of $v$ on $B_R$ and by $\|v\|_{R_1,R_2}$ the $L^2$ norm of $v$ on the set  
 $A_{R_1,R_2}:=\{x\in M ;\:R_1\leq r(x)\leq R_2\}$.  
Let $\psi\in\Cc^{\infty}_0(B_{2R})$, $0\leq\psi\leq1$, a function with the following properties:
\begin{itemize}
\item[$\bullet$] $\psi(x)=0$ if $r(x)<\frac{R}{4}$ or  $r(x)>\frac{5R}{3}$,
\item[$\bullet$] $\psi(x)=1$ if $\frac{R}{3}<r(x)<\frac{3R}{2}$,
\item[$\bullet$] $|\nabla\psi(x)|\leq \frac{C}{R}$,
 \item[$\bullet$] $|\nabla^{2}\psi(x) |\leq \frac{C}{R^2}$.
\end{itemize}
First since the function $\psi u$ is supported in the annulus $A_{\frac{R}{3},\frac{5R}{3}}$ we can apply estimate \eqref{Siv2} of theorem \ref{tics}. 
In particular we have  :
\begin{equation}\label{debut}C\left\|r^2e^{\tau\phi}\left(\Delta \psi u+2\nabla u\cdot\nabla\psi\right)\right\|\geq\tau\left\|e^{\tau\phi}\psi u\right\|.\end{equation}

\non Assume that $\tau \geq 1$, and use properties of $\psi$ to get  : 
\begin{equation}\label{3c1}\begin{array}{rcl}
\|e^{\tau\phi}u\|_{\frac{R}{3},\frac{3R}{2}} &\leq  &C\left(\|e^{\tau\phi}u\|_{\frac{R}{4},\frac{R}{3}}+\|e^{\tau\phi}u\|_{\frac{3R}{2},\frac{5R}{3}}\right) \\
&+&C\left(R\|e^{\tau\phi}\nabla u\|_{\frac{R}{4},\frac{R}{3}}+R\|e^{\tau\phi}\nabla u\|_{\frac{3R}{2},\frac{5R}{3}}\right).\end{array}\end{equation}

\non Recall that  $\phi(x)=-\ln r(x)+r(x)^\varepsilon$. In particular $\phi$ is radial and decreasing (for small $r$). Then one has, 
$$\begin{array}{rcl}
\|e^{\tau\phi}u\|_{\frac{R}{3},\frac{3R}{2}}&\leq&
 C\left(e^{\tau\phi(\frac{R}{4})}\|u\|_{\frac{R}{4},\frac{R}{3}}+e^{\tau\phi(\frac{3R}{2})}\|u\|_{\frac{3R}{2},\frac{5R}{3}}\right)\\&+&C\left(Re^{\tau\phi(\frac{R}{4})}\|\nabla u\|_{\frac{R}{4},\frac{R}{3}}+Re^{\tau\phi(\frac{3R}{2})}\|\nabla u\|_{\frac{3R}{2},\frac{5R}{3}}\right).
\end{array}$$
Now we recall the following elliptic estimates : since $u$ satisfies \eqref{sc} then it is not hard to see that :
 \begin{equation}\label{lm1}\|\nabla u\|_{(1-a)r}\leq C\left(\frac{1}{(1-a)R}+\|W\|^{1/2}_{\infty}\right)\|u\|_{B_R}, \:\:\ \mathrm{for}\  \:0<a<1.   \end{equation}
Moreover since $A_{R_1,R_2}\subset B_{R_2}$, using formula \eqref{lm1} and properties of $\phi$ gives 

$$e^{\tau\phi(\frac{3R}{2})}\| u\|_{\frac{3R}{2},\frac{5R}{3}}\leq C\left(\frac{1}{R}+\|W\|^{1/2}_{\infty}\right)e^{\tau\phi(\frac{3R}{2})}\|u\|_{2R}.$$
Using  (\ref{3c1}) one has :
 \begin{equation*}\label{ind}
\|u\|_{\frac{R}{3},R} \leq C(\|W\|^{1/2}_{\infty}+1)\left( e^{\tau(\phi(\frac{R}{4})-\phi(R))}\|u\|_{\frac{R}{2}}+e^{\tau(\phi(\frac{3R}{2})-\phi(R))}\|u\|_{2R}\right).
  \end{equation*}
  Let $A_R=\phi(\frac{R}{4})-\phi(R)$ and  $B_R=-(\phi(\frac{3R}{2})-\phi(R))$.
  From the properties of  $\phi$, we have $0<A^{-1}\leq A_R\leq A$ and $0<B\leq B_R\leq B^{-1}$ where $A$ and $B$ don't depend on $R$. 
We may assume that $C(\|W\|^{1/2}_{\infty}+1)\geq 2$. Then we can add $\|u\|_{\frac{R}{3}}$ to each member and bound it in the right hand side by  
$C(\|W\|^{1/2}_{\infty}+1)e^{\tau A}\|u\|_{\frac{R}{2}}$. We get :
  
\begin{equation}\label{3c2}
\|u\|_{R} \leq  C(\|W\|^{1/2}_{\infty}+1)\left(e^{\tau A}\|u\|_{\frac{R}{2}}+e^{-\tau B}\|u\|_{2R}\right).
  \end{equation} 
Now we want to find $\tau$ such that $$C(\|W\|^{1/2}_{\infty}+1)e^{-\tau B}\|u\|_{2R}\leq \frac{1}{2}\|u\|_{R}$$
wich is true for $\tau \geq -\frac{1}{B}\ln\left(\frac{1}{2C(\|W\|^{1/2}_{\infty}+1)}\frac{\|u\|_R}{\|u\|_{2R}}\right).$ 
Since $\tau$ must also satisfy  $$\tau \geq C_1\sqrt{\|W\|_{\Cc^1}}+C_2,$$
we choose
\begin{equation} \label{tau2}
\tau = -\frac{1}{B}\ln\left(\frac{1}{2C(\|W\|^{1/2}_{\infty}+1)}\frac{\|u\|_R}{\|u\|_{2R}}\right)+C_1\sqrt{\|W\|_{\Cc^1}}+C_2.
\end{equation}

\noindent Since $\|W\|_\infty\leq\|W\|_{\Cc^1}$ we can deduce from \eqref{3c2} that :

\begin{equation}\label{last}
\|u\|_R^{\frac{B+A}{B}}\leq e^{C_1\sqrt{\|W\|_{\Cc^1}}+C_2}\|u\|_{2R}^{\frac{A}{B}}\|u\|_{\frac{R}{2}},
\end{equation}
 
\noindent  Finally define  $\alpha=\frac{A}{A+B}$ and taking exponent $\frac{B}{A+B}$ of \eqref{last}: 
 \begin{equation*}
 \|u\|_R\leq e^{C_1\sqrt{\|W\|_{\Cc^1}}+C_2}\|u\|_{2R}^{\alpha}\|u\|^{1-\alpha}_{\frac{R}{2}}.
 \end{equation*} 

\end{proof}

From now on we assume that $M$ is compact. Thus we can derive from three balls theorem above uniform doubling estimate on solutions.

\begin{thm}[doubling estimate]\label{dor}

There exist two positive constants  $C_1$ and $C_2$,  depending only on  $M$ such that : if $u$ is a solution to   \eqref{sc} on $M$ with $W$ of class $\Cc^1$
then  for any $x_0$ in $M$ and any  $r>0$, one has
\begin{equation}\label{do}\|u\|_{B_{2r}(x_0)}\leq e^{C_1\sqrt{\|W\|_{\Cc^1}}+C_2}\|u\|_{B_r(x_0)}.
\end{equation}
\end{thm}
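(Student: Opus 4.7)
My plan combines Proposition~\ref{tst} with the compactness of $M$ in two steps. First, I fix a small radius $r_0>0$ depending only on $M$ and show that
\[
\|u\|_{L^2(M)} \;\leq\; e^{C_1\sqrt{\|W\|_{\Cc^1}}+C_2}\,\|u\|_{L^2(B_{r_0}(x_0))},\qquad x_0\in M.
\]
Since $M$ is compact and connected, any point of $M$ can be joined to $x_0$ by a chain of at most $N=N(M)$ overlapping geodesic balls of radius $\sim r_0/4$. Applying Proposition~\ref{tst} (in the form \eqref{ttc}) at each link of this chain propagates the $L^2$ norm from one ball to the next, each step contributing a multiplicative factor $e^{C_1\sqrt{\|W\|_{\Cc^1}}+C_2}$; since $N$ depends only on $M$, the accumulated constant is still of the same form.

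\textbf{Step 2 (doubling).} For $r\geq r_0$ the doubling follows at once from Step~1 and monotonicity, since $\|u\|_{B_{2r}(x_0)}\leq \|u\|_{L^2(M)}\leq e^{\ldots}\|u\|_{B_{r_0}(x_0)}\leq e^{\ldots}\|u\|_{B_r(x_0)}$. For $r<r_0$, I apply Proposition~\ref{tst} at $x_0$ along the dyadic scales $r_k=2^{-k}r_0$ until $r_k\sim r$, and invoke Step~1 to bound the outermost norm by $e^{\ldots}\|u\|_{B_{r_0}(x_0)}$. The resulting chain of three-balls inequalities closes into a linear recurrence on the log step-sizes $d_k := \log(\|u\|_{B_{r_{k-1}}(x_0)}/\|u\|_{B_{r_k}(x_0)})$; if this recurrence is contracting, iterating from the $d_0$ furnished by Step~1 yields the uniform bound $d_k\leq C_1\sqrt{\|W\|_{\Cc^1}}+C_2$, which is precisely the doubling inequality at $x_0$.

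\textbf{Main obstacle.} The contraction is the delicate point. The exponent $\alpha$ appearing in Proposition~\ref{tst} is computable from the weight as $\alpha=B/(A+B)$ with $A\approx\ln 4$ and $B\approx\ln(3/2)$ (the gaps $\phi(R/4)-\phi(R)$ and $\phi(R)-\phi(3R/2)$), hence $\alpha\approx\ln(3/2)/\ln 6<\tfrac12$; the naive recurrence $d_{k+1}\leq \tfrac{1-\alpha}{\alpha}d_k+\text{const}$ therefore has multiplier $>1$ and expands rather than contracts. The fix is to iterate Proposition~\ref{tst} a finite, $M$-dependent number of times, substituting the outer norm $\|u\|_{B_{2R}(x_0)}$ using Proposition~\ref{tst} applied at the larger scale $2R$. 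A short calculation shows the inner exponent is then transformed by the map $\alpha\mapsto \alpha/(1-\alpha+\alpha^2)$, which is strictly increasing on $(0,1)$ with $1$ as attractive fixed point, so finitely many iterations push it past $\tfrac12$. The prefactor of the iterated inequality stays of the form $e^{C_1\sqrt{\|W\|_{\Cc^1}}+C_2}$ (the finite number of substitutions is absorbed in the constants $C_1,C_2$), and the recurrence then contracts, closing the argument.
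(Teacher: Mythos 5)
Your Step~1 is correct and is exactly Proposition~\ref{cor1} in the paper. The gap is in Step~2: the ``fix'' you propose does not work, and in fact no iteration scheme built only out of Proposition~\ref{tst} can close the argument. The iterated inequality you derive, $\|u\|_{R}\leq K'\|u\|_{R/2}^{\alpha'}\|u\|_{4R}^{1-\alpha'}$ with $\alpha'=\alpha/(1-\alpha+\alpha^{2})$, is obtained by a \emph{positive linear combination} of the original log-inequalities (take the inequality at scale $R$, add $(1-\alpha)$ times the inequality at scale $2R$, and divide by $1-\alpha+\alpha^{2}$). But one can check directly that, when $\alpha<\tfrac12$, no positive linear combination of the log-inequalities $a_{k}\leq c+\alpha a_{k+1}+(1-\alpha)a_{k-1}$ (where $a_{k}=\log\|u\|_{B_{2^{-k}R_0}}$) imposes any upper bound on a linear sequence $a_{k}=-dk$: for weights $w_{j}\geq0$ the coefficient of $d$ in $\sum_{j}w_{j}\bigl(a_{j}-\alpha a_{j+1}-(1-\alpha)a_{j-1}\bigr)$ is $-(1-2\alpha)\sum_{j}w_{j}\leq0$, so the inequality is vacuous. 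Equivalently, while $\alpha'$ does exceed $\tfrac12$ after a few substitutions, the outer radius simultaneously grows to $2^{m+1}R$, and the log-convexity threshold needed for the dyadic recurrence to contract at scales $(R/2,R,2^{m+1}R)$ is $\tfrac{m+1}{m+2}$, which the iterated exponent never reaches (this is the same obstruction seen from another angle). So the one-parameter family of boundary data from Step~1 does not propagate to a uniform bound on the doubling constants at all dyadic scales.

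The paper's proof avoids the iteration altogether. After proving Proposition~\ref{cor1} (your Step~1) and its annular variant Corollary~\ref{cor2}, it re-applies the Carleman estimate \emph{directly} to a cut-off $\psi u$ whose support spans two widely separated scales, from an arbitrary small $\delta$ up to a fixed $R=R_{0}/4$. Here the second form \eqref{Siv2} of the Carleman estimate is crucial: the extra term $\tau\delta\|r^{-1}e^{\tau\phi}u\|$ controls the boundary contribution at the inner scale $\delta$ uniformly, so the weight that ends up on the innermost ball does \emph{not} deteriorate as $\delta\to0$. Concretely, after choosing $\tau$ to absorb the outer term one arrives at $\|u\|_{3\delta}\leq e^{C(1+\sqrt{\|W\|_{\Cc^1}})}\bigl(\|u\|_{R/8,R/4}/\|u\|_{5R/3}\bigr)^{A}\|u\|_{3\delta/2}$ with $A$ depending only on $M$, and Corollary~\ref{cor2} bounds the parenthetical ratio from below. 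This produces a genuine two-scale inequality that is strictly stronger than what any composition of three-balls inequalities can give, and it closes the argument. If you want to repair your write-up, the missing ingredient is precisely this second Carleman application with the $\tau\delta\|r^{-1}e^{\tau\phi}u\|$ gain; boosting $\alpha$ by algebraic substitution cannot substitute for it.
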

\vspace{0,5cm}
\begin{Rq}
Using standard elliptic theory to bound the $L^\infty$  norm of $|u|$ by a multiple of its $L^2$ norm, and rescaling in small ball gives for $\delta>0$ : 
$$\|u\|_{L^\infty(B_\delta(x_0))}\leq (C_1\|W\|_\infty+C_2)^{\frac{n}{2}}\delta^{-n/2}\|u\|_{L^2(B_{2\delta}(x_0))}.$$ 
Then  one can see that the doubling estimate is still true with the $L^\infty$ norm 
\begin{equation}
\|u\|_{L^\infty(B_{2r}(x_0))}\leq e^{C_1\sqrt{\|W\|_{\Cc^1}}+C_2}\|u\|_{L^\infty(B_r(x_0))}.
\end{equation}
\end{Rq}
\begin{Rq}
 We recall also that, it is necessary to assume that $M$ is compact to obtain an uniform upper bound on the vanishing order, and therefore a doubling estimate, on solutions. 
Indeed, consider the harmonic function, $f_k=\Re e(x_1+ix_2)^k$ defined in $\R^2$, so $f_k$ satisfies \eqref{sc} with $W=0$.
 The function $f_k$ can vanish at arbitrary high order at 0.
\end{Rq}

\noindent  To prove the theorem \ref{dor} we need to use the standard overlapping chains of balls argument (\cite{DF1,JL,K}) to show :  
 \begin{prop}\label{cor1}
For any $R>0$ their exists $C_R>0$ such that for any  $x_0\in M$, any $W\in \Cc^1(M)$ and any solutions $u$ to \eqref{sc} :
$$\| u\|_{B_R(x_0)}\geq e^{-C_R(1+\sqrt{\|W\|_{\Cc^1}})}\| u\|_{L^2(M)} .$$
\end{prop}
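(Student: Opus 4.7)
The plan is to propagate the lower bound $\|u\|_{B_R(x_0)}$ to a lower bound on $\|u\|_{L^2(M)}$ by an overlapping chain of balls, iterating the three-balls inequality \eqref{ttc} at each step. Let me fix once and for all a radius $\tilde R = \min(R/2,\,R_1/4)$ so that $2\tilde R<R_1$ and the three-balls inequality is available at every point of $M$ with common radius $\tilde R$. By compactness, choose a finite cover $M = \bigcup_{i=1}^{N_0} B_{\tilde R/2}(z_i)$, the integer $N_0$ depending only on $M$ and $R$.

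The first step is the one-step propagation. For any two points $y,y'\in M$ with $d(y,y')\le \tilde R/2$, the inclusion $B_{\tilde R/2}(y')\subset B_{\tilde R}(y)$ and $B_{2\tilde R}(y')\subset M$ combined with \eqref{ttc} applied at $y'$ yield
\begin{equation*}
\|u\|_{B_{\tilde R}(y')}\;\le\;e^{C_1\sqrt{\|W\|_{\Cc^1}}+C_2}\,\|u\|_{B_{\tilde R}(y)}^{\alpha}\,\|u\|_{L^2(M)}^{1-\alpha}.
\end{equation*}
The second step is iteration along a chain. Since $M$ is connected, for every $z_i$ one can build a sequence $x_0=y_0,y_1,\dots,y_{k_i}=z_i$ with $d(y_j,y_{j+1})\le \tilde R/2$ and $k_i\le N$, where $N$ depends only on $M$ and $R$ (e.g. $N\simeq \mathrm{diam}(M)/\tilde R$). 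Iterating the one-step inequality $k_i$ times one gets, with $\sigma_k=1+\alpha+\cdots+\alpha^{k-1}\le \frac{1}{1-\alpha}$,
\begin{equation*}
\|u\|_{B_{\tilde R}(z_i)}\;\le\;e^{\sigma_{k_i}(C_1\sqrt{\|W\|_{\Cc^1}}+C_2)}\,\|u\|_{B_{\tilde R}(x_0)}^{\alpha^{k_i}}\,\|u\|_{L^2(M)}^{1-\alpha^{k_i}}.
\end{equation*}

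The third step is to sum over the cover. Using $\|u\|_{L^2(M)}^2\le \sum_i\|u\|_{B_{\tilde R/2}(z_i)}^2\le N_0\max_i\|u\|_{B_{\tilde R}(z_i)}^2$ and bounding uniformly $k_i\le N$, one arrives at
\begin{equation*}
\|u\|_{L^2(M)}\;\le\;\sqrt{N_0}\;e^{\frac{1}{1-\alpha}(C_1\sqrt{\|W\|_{\Cc^1}}+C_2)}\,\|u\|_{B_{\tilde R}(x_0)}^{\alpha^{N}}\,\|u\|_{L^2(M)}^{1-\alpha^{N}}.
\end{equation*}
Dividing by $\|u\|_{L^2(M)}^{1-\alpha^N}$ (we may assume $u\not\equiv 0$) and raising both sides to the power $\alpha^{-N}$ absorbs the exponent on $\|u\|_{B_{\tilde R}(x_0)}$, producing a linear estimate
\begin{equation*}
\|u\|_{L^2(M)}\;\le\; e^{C_R(1+\sqrt{\|W\|_{\Cc^1}})}\,\|u\|_{B_{\tilde R}(x_0)}\;\le\;e^{C_R(1+\sqrt{\|W\|_{\Cc^1}})}\,\|u\|_{B_R(x_0)},
\end{equation*}
with $C_R$ depending on $N$, $N_0$, $\alpha$, $C_1$, $C_2$ and hence only on $R$ and $M$. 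This is the inequality claimed.

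The only genuine difficulty is the bookkeeping around the exponent $\alpha^N$: because $\alpha<1$, the constant $\alpha^{-N}$ blows up in $N$, and one must check that $N$ is fixed once $R$ and $M$ are given so that $\alpha^{-N}$ enters only the $R$-dependent constant $C_R$, keeping the $\sqrt{\|W\|_{\Cc^1}}$-dependence linear. The other steps (compactness, construction of chains, use of \eqref{ttc}) are routine once the uniformity of the radius $\tilde R$ and the chain length $N$ across all points of $M$ has been arranged.
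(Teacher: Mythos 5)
Your proof is correct and follows essentially the same approach as the paper: both iterate the three-balls inequality \eqref{ttc} along a finite chain of overlapping balls and use compactness to keep the chain length and cover size bounded by quantities depending only on $M$ and $R$, so that the exponent $\alpha^{-N}$ is absorbed into the $R$-dependent constant. The only cosmetic difference is that the paper normalizes $\|u\|_{L^2(M)}=1$, selects a single maximizing point $\bar{x}$ with $\|u\|_{B_R(\bar{x})}\geq D_R$ (a pigeonhole lower bound from a finite cover), and chains from $x_0$ to $\bar{x}$ using the rearranged inequality $\|u\|_{B_{R/2}(x)}\geq e^{-c(1+\sqrt{\|W\|_{\Cc^1}})}\|u\|_{B_R(x)}^{1/\alpha}$, whereas you carry the factor $\|u\|_{L^2(M)}^{1-\alpha^k}$ through the iteration and divide at the end; the two variants produce the same estimate.
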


\begin{proof}
We may assume without loss of generality that $R<R_0$, with $R_0$ as in the three balls inequality (proposition \ref{ttc}).
 Up to multiplication by a constant, we can assume that $\| u\|_{L^2(M)}=1$.
 We denote by $\bar{x}$ a point in $M$ such that  $\| u\|_{B_R(\bar{x})}=\sup_{x\in M}  \|u\|_{B_R(x)}$.
 This implies that one has  $\| u\|_{B_{R(\bar{x})}}\geq D_R$, where $D_R$ depend only on $M$ and $R$. One has from proposition (\ref{ttc})  at an arbitrary point $x$ of $M$ : 

\begin{equation}\label{cop}\| u\|_{B_{R/2}(x)}\geq e^{-c(1+\sqrt{\|W\|_{\Cc^1}})}\| u\|^{\frac{1}{\alpha}}_{B_R(x)}.\end{equation} 

Let $\gamma$ be a geodesic curve  beetween $x_0$ and $\bar{x}$ and define  $x_1,\cdots,x_m=\bar{x}$ such that 
 $x_i\in\gamma$ and
 $B_{\frac{R}{2}}(x_{i+1})\subset B_R(x_i),$ for any  $i$ from $0$ to $m-1$. The number $m$  depends only on $\mathrm{diam}(M)$ and  $R$.
 Then the properties of $(x_i)_{1\leq i\leq m}$ and inequality \eqref{cop} give for all $i$, $1\leq i\leq m$ :
\begin{equation}
\|u\|_{B_{R/2}(x_i)}\geq e^{-c(1+\sqrt{\|W\|_{\Cc^1}})}\|u\|^{\frac{1}{\alpha}}_{B_{R/2}(x_{i+1})}.
\end{equation}

The result follows by iteration and the fact that $\| u\|_{B_R(\bar{x})}\geq D_R$.

\end{proof}
\begin{cor}\label{cor2}
For all $R>0$, there exists a positive constant $C_R$ depending only on $M$ and $R$ such that at any point $x_0$ in $M$ one has
\begin{equation*}
\| u\|_{R,2R}\geq e^{-C_R(1+\sqrt{\|W\|_{\Cc^1}})}\| u\|_{L^2(M)}.
\end{equation*}
\end{cor}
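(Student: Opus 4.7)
The plan is to reduce Corollary \ref{cor2} to proposition \ref{cor1} by locating a geodesic ball of radius comparable to $R$ inside the annulus $A_{R,2R}(x_0)$, and then applying the proposition at the center of that ball. The only geometric ingredient needed is the triangle inequality together with the connectedness of $M$.

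First, I would locate a point $y_0\in M$ at geodesic distance $d(y_0,x_0)=3R/2$ from $x_0$. Such a point exists whenever $3R/2\leq \mathrm{diam}(M)$ by the connectedness and compactness of $M$, and this is precisely the regime in which the annulus $A_{R,2R}(x_0)$ is non-empty (outside of it the statement is vacuous). Setting $\rho:=\min(R/2,R_0/2)$, where $R_0$ is the scale from theorem \ref{tics}, the triangle inequality yields
$$B_\rho(y_0)\subset\{x\in M:R\leq d(x,x_0)\leq 2R\}=A_{R,2R}(x_0).$$

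Next, I would apply proposition \ref{cor1} at the point $y_0$ with the smaller radius $\rho$. Since $\rho\leq R_0/2$, the underlying three-balls plus chain-of-balls machinery used to prove proposition \ref{cor1} applies without modification, and yields a constant $C_\rho>0$ depending only on $M$ and $\rho$ (hence only on $M$ and $R$) such that
$$\|u\|_{L^2(B_\rho(y_0))}\geq e^{-C_\rho(1+\sqrt{\|W\|_{\Cc^1}})}\|u\|_{L^2(M)}.$$
The inclusion $B_\rho(y_0)\subset A_{R,2R}(x_0)$ then immediately gives $\|u\|_{L^2(B_\rho(y_0))}\leq\|u\|_{R,2R}$, and setting $C_R:=C_\rho$ produces the desired estimate.

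The main potential obstacle is essentially cosmetic: one has only to verify that the intermediate radius $\rho$ stays in the admissible range of proposition \ref{cor1} and that $y_0$ can actually be placed at the prescribed distance. Both are immediate from the compactness and connectedness of $M$, so no new analytic input beyond proposition \ref{cor1} is required. The corollary is therefore a direct geometric consequence of the substantially harder ball estimate proven above.
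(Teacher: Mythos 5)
Your proof is correct and follows essentially the same route as the paper's: both arguments locate a geodesic ball of radius comparable to $R$ inside the annulus $A_{R,2R}(x_0)$ and then invoke Proposition~\ref{cor1} at its center, with the inclusion $B_\rho(y_0)\subset A_{R,2R}(x_0)$ giving the result. The only difference is cosmetic — the paper takes an unspecified center in the annulus with radius $R/4$, while you pin the center at distance $3R/2$ and take radius $\min(R/2,R_0/2)$ — but the underlying geometric reduction is identical.
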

\begin{proof} 
 Recall that $\|u\|_{R,2R}=\|u\|_{L^2(A_{R,2R})}$ with $A_{R,2R}:=\{x\in M ; R\leq d(x,x_0)\leq 2R)\}$.
Let  $R<R_0$ where $R_0$ is from proposition \ref{3c1}, note that $R_0\leq \mathrm{diam}(M)$. 
Since $M$ is geodesically complete, there exists a point $x_1$ in  $A_{R,2R}$  
 such that $B_{x_1}(\frac{R}{4})\subset A_{R,2R}$. From proposition \ref{cor1} one has 
 $\|u\|_{B_{\frac{R}{4}}(x_1)}\geq e^{-C_R(1+\sqrt{\|W\|_{\Cc^1}})}\| u\|_{L^2(M)}$
 wich gives the result. 
\end{proof}
\begin{proof}[Proof of theorem \ref{dor}]

We proceed as in the proof of three balls inequality (proposition \ref{3c1}) except for the fact that now we want the first ball to become arbitrary small in front of the others.
 Let  $R=\frac{R_0}{4}$ with $R_0$ as in the three balls inequality,  let  $\delta$ such that  $0<3\delta<\frac{R}{8}$,
and  define  a smooth function $\psi$, with $0\leq\psi\leq1$  as follows: \vspace{0,3cm}
 
\begin{itemize}
\item[$\bullet$] $\psi(x)=0$ if $r(x)<\delta$ or if $r(x)>R$,
\item[$\bullet$] $\psi(x)=1$ if  $r(x)\in[\frac{5\delta}{4},\frac{R}{2}]$,
\item[$\bullet$] $|\nabla\psi(x)|\leq\frac{C}{\delta}$ and  $|\nabla^2\psi(x)|\leq\frac{C}{\delta^2}$ if $r(x)\in[\delta,\frac{5\delta}{4}]$ ,
 \item[$\bullet$] $|\nabla\psi(x)|\leq C$ and $|\nabla^2\psi(x)|\leq C$ if $r(x)\in[\frac{R}{2},R]$.\vspace{0,3cm}
\end{itemize}
Keeping appropriates terms in \eqref{Siv2} applied to $\psi u$ gives :  
\begin{equation*}
\begin{array}{rcl}
\|r^{\frac{\varepsilon}{2}}e^{\tau\phi}\psi u\|+ \tau\delta\|r^{-1}e^{\tau\phi}\psi u\|
&\leq &C\left(\|r^2e^{\tau\phi}\nabla u\cdot\nabla\psi\|+\|r^2e^{\tau\phi}\Delta\psi u\|\right). \vspace{0,15cm}\\
\end{array}
\end{equation*}
Using properties of $\psi$, one has

\begin{eqnarray*}
\|r^{\frac{\varepsilon}{2}}e^{\tau\phi} u\|_{\frac{R}{8},\frac{R}{4}}+\|e^{\tau\phi}u\|_{\frac{5\delta}{4},3\delta} \!\!&\leq  &\!\!C \left(\delta \|e^{\tau\phi}\nabla u\|_{\delta,\frac{5\delta}{4}}+\|e^{\tau\phi}\nabla u\|_{\frac{R}{2},R}\right)\\
&\!\!+\!\!&C\left(\|e^{\tau\phi} u\|_{\delta,\frac{5\delta}{4}}+\|e^{\tau\phi}u\|_{\frac{R}{2},R}\right).\nonumber
\end{eqnarray*}
 \noindent Using  (\ref{lm1}) and properties of   $\phi$, we get

\begin{equation*}\begin{array}{ccl}
e^{\tau\phi(\frac{R}{4})} \|u\|_{\frac{R}{8},\frac{R}{4}}&+&e^{\tau\phi(3\delta)}\|u\|_{\frac{5\delta}{4},3\delta} \\ 
&\leq& C(1+\|W\|_{\infty}^{1/2})\left(e^{\tau\phi(\delta)}\|u\|_{\frac{3\delta}{2}}+e^{\tau\phi(\frac{R}{5})}\|u\|_{\frac{5R}{3}}\right),\end{array}
\end{equation*}

\noindent and adding $e^{\tau\phi(3\delta)}\|u\|_{\frac{5\delta}{4}}$ to each side leads to 
\begin{equation*}\begin{array}{ccl}
e^{\tau\phi(\frac{R}{4})} \|u\|_{\frac{R}{8},\frac{R}{4}}&+&e^{\tau\phi(3\delta)}\|u\|_{3\delta}\\ &\leq &
C(1+\|W\|_{\infty}^{1/2})\left(e^{\tau\phi(\delta)}\|u\|_{\frac{3\delta}{2}}+e^{\tau\phi(\frac{R}{5})}\|u\|_{\frac{5R}{3}}\right).
\end{array}\end{equation*}
Now we want to choose $\tau$ such that  
$$C(1+\|W\|_{\infty}^{1/2}) e^{\tau\phi(\frac{R}{5})}\|u\|_{\frac{5R}{3}}\leq \frac{1}{2}e^{\tau\phi(\frac{R}{4})} \|u\|_{\frac{R}{8},\frac{R}{4}}.$$
For the same reasons than before we choose 
  $$\tau=\frac{1}{\phi(\frac{R}{5})-\phi(\frac{R}{4})}\mathrm{ln}\left(\frac{1}{2C(1+\|W\|_{\infty}^{1/2})}\frac{\|u\|_{\frac{R}{8},\frac{R}{4}}}{\|u\|_{\frac{5R}{3}}}\right)+C_1(1+\sqrt{\|W\|_{\Cc^1}}).$$
 Define $D_R=\left(\phi(\frac{R}{5})-\phi(\frac{R}{4})\right)^{-1}$; like before one has $0<A^{-1}\leq D_R\leq A$.
 Droping the first term in the left hand side, one has
  $$\|u\|_{3\delta}\leq e^{C(1+\|W\|_{\Cc^1})}\left(\frac{\|u\|_{\frac{R}{8},\frac{R}{4}}}{\|u\|_{\frac{5R}{3}}}\right)^{A}\|u\|_{\frac{3\delta}{2}} $$
  Finally from corollary \ref{cor2}, define  $r=\frac{3\delta}{2}$ to have : 
  $$\|u\|_{2r}\leq e^{C(1+\sqrt{\|W\|_{\Cc^1}})}\|u\|_{r}. $$
   Thus, the theorem is proved for all $r\leq\frac{R_0}{16}$.
 Using proposition \ref{cor1} we have for $r\geq \frac{R_0}{16}$ :
\begin{equation*}\begin{array}{rcl}
\|u\|_{B_{x_0}(r)}\geq\| u\|_{B_{x_0}(\frac{R_0}{16})}&\geq& e^{-C_0(1+\sqrt{\|W\|_{\Cc^1}})}\| u\|_{L^2(M)}\\
&\geq& e^{-C_1(1+\sqrt{\|W\|_{\Cc^1}})} \|u\|_{B_{x_0}(2r)}.
  \end{array}\end{equation*}
\end{proof}
As stated before, the upper bound on vanishing order of solutions (theorem \ref{van}) is a direct consequence of theorem \ref{dor} for non trivial solutions to \eqref{sc}.

\bibliographystyle{alpha}
\bibliography{4713.bib}

\end{document}